\newcolumntype{R}{>{\raggedleft\let\newline\\\arraybackslash\hspace{0pt}}m{20pt}}
\begin{document}

\title*{An Adaptive Penalty Method for Inequality Constrained Minimization Problems}
\author{W. M. Boon and J. M. Nordbotten}
\institute{W. M. Boon \at 
	Department of Mathematics, KTH Royal Institute of Technology, Lindstedtsv\"agen 25, 11428, Stockholm, Sweden.
	\email{wietse@kth.se}
	\and
	J. M. Nordbotten \at Department of Mathematics, University of Bergen, Postboks 7803, 5020, Bergen, Norway \\
	Department of Civil and Environmental Engineering,
	Princeton University, E-208 E-Quad, Princeton, NJ, 08544, USA
	}

\maketitle

\abstract{
	The primal-dual active set method is observed to be the limit of a sequence of penalty formulations. Using this perspective, we propose a penalty method that adaptively becomes the active set method as the residual of the iterate decreases. The adaptive penalty method (APM) therewith combines the main advantages of both methods, namely the ease of implementation of penalty methods and the exact imposition of inequality constraints inherent to the active set method. The scheme can be considered a quasi-Newton method in which the Jacobian is approximated using a penalty parameter. This spatially varying parameter is chosen at each iteration by solving an auxiliary problem.
	}

\section{Introduction} \label{sec: Introduction}

	Inequality constrained minimization problems arise in a variety of applications, most prominently in contact problems in mechanics. To solve these problems, written as variational inequalities, a vast number of numerical methods exist and we refer the reader to \cite{kikuchi1988contact,suttmeier2009numerical,tremolieres2011numerical,wohlmuth2011variationally}, and references therein, for thorough expositions of such methods. This work concerns two seemingly unrelated families of numerical schemes, namely penalty methods (see e.g. \cite{carstensen1999adaptive,hansbo1999adaptive}) and the primal-dual active set method (see e.g. \cite{hintermuller2002primal,hueber2005primal}). 

	One of the main advantages of penalty methods is the ease of implementation. The penalty term can generally be incorporated as an addition to the original minimization problem in existing numerical software. Strictly speaking, however, the penalty term slightly alters the problem and the obtained solution may not satisfy the original constraints exactly. The active set method therefore forms an attractive alternative, as it does explicitly ensure that the solution complies to these constraints. Its disadvantage, however, is that the method typically requires an intrusive implementation in existing software and is prone to slow convergence.

	This work forms a link between these two families by proposing a penalty method that adaptively evolves to the primal-dual active set method. Depending on its interpretation, the scheme therefore belongs to both families. In particular, the scheme can be implemented as a penalty method and converges to the same solution as the active set method.

	Our starting point is the observation from \cite{hintermuller2002primal}, in which the primal-dual active set method is identified as a semi-smooth Newton method. We expand on this result by considering a regularization of the minimization problem to which the conventional Newton method can be applied. Instead of iterating until convergence, we introduce an adaptive removal of the regularization based on the residual in each iterative step. Thus, as the residual becomes smaller, the regularization decreases and the method is expected to convergence to the solution of the original problem. 

	The article proceeds as follows. Section~\ref{sec: Preliminary Concepts} introduces the family of constrained minimization problems of interest, the notational conventions, and a concise introduction to the primal-dual active set method and a specific class of penalty methods. 
	The main contribution of this work is presented in Section~\ref{sec: Adaptive Penalty Method}, namely an iterative scheme that adaptively combines the advantages of penalty and active set methods. Finally, Section~\ref{sec: Numerical Results} presents the numerical performance of the proposed scheme for a synthetic test case corresponding to a one-dimensional obstacle problem. 

\section{Problem Formulation and Solution Methods} \label{sec: Preliminary Concepts}
	On a given, open domain $\Omega \subset \mathbb{R}^n$, we consider the function space $V$. We assume $V$ is a reflexive Banach space with norm $\| \cdot \|$ and let $V^*$ denote its dual. Let $f \in V^*$ be a bounded linear functional and $A: V \to V^*$ a continuous, $V$-elliptic operator, i.e.
	\begin{align*}
		\langle f, v \rangle &\lesssim \| v \|, &
		\langle A u, v \rangle &\lesssim \| u \| \| v \|, & 
		\langle A v, v \rangle &\gtrsim  \| v \|^2, &
		\forall u, v &\in V.
	\end{align*}
	Here, $\langle \cdot, \cdot \rangle$ denotes the $V^* \times V$ duality pairing and the notation $a \lesssim b$ implies that a constant $C > 0$ exists such that $a \le C b$. For given $g \in V$, we consider the following constrained minimization problem:
	\begin{subequations} \label{eq: minimization problem}
		\begin{align}
			\min_{v \in V} J(v) &= \min_{v \in V} \frac{1}{2} \langle Av, v \rangle - \langle f, v \rangle \\
			\text{subject to } v &\le g
		\end{align}
	\end{subequations}

	Finding the minimizer $u \in V$ of problem \eqref{eq: minimization problem} is equivalent to solving either of the following two problems:
	\begin{multicols}{2}
	\noindent
	\emph{Primal formulation}: \\
	Find $u \in V$ such that
		\begin{subequations} \label{eq: primal}
			\begin{align}
					Au - f &\le 0, \\
					u - g &\le 0, \\
					\langle Au - f, u - g \rangle &= 0.
			\end{align}
		\end{subequations}
	\noindent
	\emph{Dual formulation}: \\
	Find $(u, \lambda) \in V \times V^*$ such that
		\begin{subequations} \label{eq: Dual}
			\begin{align} 
					Au - f + \lambda &= 0, \\
					\lambda &\ge 0, \\
					u - g &\le 0, \\
					\langle \lambda, u - g \rangle &= 0.
			\end{align}
		\end{subequations}
	\end{multicols}

	For both formulations, we can simplify the inequalities as well as the final equation into a single equation. For that purpose, we introduce the function $M: V^* \times V \to V^*$ given by
	\begin{align} \label{eq: def M}
		M(\phi, \varphi) := \phi - [\phi + c\varphi ]_+,
	\end{align}
	with $[\psi]_+ = \max\{0, \psi\}$ in the appropriate sense of elements of $V^*$. Moreover, $c: V \to V^*$ is the inverse Riesz map and we allow $c$ to include a scaling with a positive distribution. Clearly, we have
	\begin{align}
		M(\phi, \varphi) &= 0 & 
		&\Leftrightarrow & 
			\phi &\ge 0, \ 
			\varphi \le 0, \ 
			\langle \phi, \varphi \rangle = 0.
	\end{align}

	Thus, we can equivalently describe the primal formulation \eqref{eq: primal} by
	\begin{align} \label{eq: primal with M}
		M(f - Au, u - g) &= 0,
	\end{align}
	and the dual formulation \eqref{eq: Dual} by
	\begin{subequations} \label{eq: Dual with M}
		\begin{align}
			Au + \lambda &= f, \\
			M(\lambda, u - g) &= 0.
		\end{align}
	\end{subequations}

	To solve such problems numerically, we consider two families of iterative schemes, namely the active set method and penalty methods. We continue with a concise expsoition of these methods, presented in the following subsections, respectively.

\subsection{Primal-Dual Active Set Method}
	
	The primal-dual active set method uses the dual formulation \eqref{eq: Dual with M} and iteratively updates the set on which the constraint $u = g$ is imposed.	For the general problem \eqref{eq: minimization problem}, we define this \emph{active set} at iterate $k$ as
	\begin{subequations} \label{eq: def active region}
	\begin{align}
		\mathcal{A}^k := \{ x \in \Omega: \lambda^k(x) + c(u^k(x) - g(x)) > 0 \}.
	\end{align}
	
	In the case that $V$ is a piecewise linear finite element space defined by nodal evaluations at coordinates $x_i$, the active set $\mathcal{A}^k$ is defined by
	\begin{align}
		\mathcal{A}^k := \{ i: \lambda^k(x_i) + c(u^k(x_i) - g(x_i)) > 0 \}.
	\end{align}
	\end{subequations}
	Its complement on $\Omega$ is referred to as the \emph{inactive set}, denoted by $\mathcal{I}^k$. For brevity of notation, we introduce the indicator function $\mathds{1}_{\mathcal{A}}^k$ which is identity in $\mathcal{A}^k$ and zero otherwise. The indicator function $\mathds{1}_{\mathcal{I}}^k$ is defined analogously.
	For a given active set $\mathcal{A}^k$, the primal-dual active set method then solves the following system of equations 
			\begin{align*}
				\begin{bmatrix}
					A & I \\
					-\mathds{1}_{\mathcal{A}}^k c & \mathds{1}_{\mathcal{I}}^k
				\end{bmatrix}
				\begin{bmatrix}
					u^{k + 1} \\
					\lambda^{k + 1}
				\end{bmatrix} 
				&= 
				\begin{bmatrix}
					f \\
					-\mathds{1}_{\mathcal{A}}^k c g
				\end{bmatrix}
			\end{align*}	
	We simplify this system by substituting $\lambda^{k + 1} = f - Au^{k + 1}$ from the first row into the second, giving us Algorithm~\ref{alg: ASM}.
	\begin{algorithm}[ht]
		\caption{Active Set Method}
		\label{alg: ASM}
		\begin{enumerate}[(i)]
			\item Set $k = 0$ and initialize $u^0$.
			\item Compute $\mathcal{A}^k$ using \eqref{eq: def active region}.
			\item Solve for $u^{k+1}$:
			\begin{align} \label{eq: Active Set Method}
				(\mathds{1}_{\mathcal{I}}^k A + \mathds{1}_{\mathcal{A}}^k c) u^{k + 1}
				&= \mathds{1}_{\mathcal{I}}^k f + \mathds{1}_{\mathcal{A}}^k c g.
			\end{align}
			\item Stop if converged, else increment $k$ and return to (ii).
		\end{enumerate}
	\end{algorithm}


\subsection{Penalty Method}

	The defining attribute of penalty methods is the modification of the formulation by introducing a term which penalizes the solution $u$ if it is outside the admissible set \cite{grossmann2007numerical}. To be precise, we introduce a penalty parameter $\rho \ge 0$ and an operator $\Pi_{\rho}: V \to V^*$ to modify the primal formulation \eqref{eq: primal} to: \\
	Find $u \in V$ such that
		\begin{align} \label{eq: penalty method}
			A u - f + \Pi_{\rho} u &= 0.
		\end{align}

	We use the convention that a smaller value of $\rho$ corresponds to a stricter penalization. 

	It is advantageous to choose the penalty operator $\Pi_{\rho}$ sufficiently smooth in order to apply the Newton method.
	We consider a particular choice of $\Pi_{\rho}$ obtained from a regularization of the problem \eqref{eq: primal with M}. For that purpose we use \cite{chen1995smoothing} and let $[\cdot]_\rho$ be the smooth approximation of $[\cdot]_+$ given by
		\begin{align*}
			[\phi]_\rho &:= \phi + \rho \log (1 + \exp( - \phi / \rho)), &
			[\phi]_\rho' &= (1 + \exp(- \phi / \rho))^{-1}.
		\end{align*}
	It is important to note that this function and its derivative have the following properties for all $\phi \in V^*$:
	\begin{align} \label{eq: smoothed ramp}
		\lim_{\rho \downarrow 0} [\phi]_\rho &= [\phi]_+, &
		\lim_{\rho \downarrow 0} [\phi]_\rho'
		= \lim_{\rho \downarrow 0} \frac{d}{d \phi} [\phi]_\rho 
		&= \mathds{1}_{\phi > 0}.
	\end{align}
	
	Using this operator, we define the regularization of $M$ as
	\begin{align*}
		M_{\rho}(\phi, \varphi) &:= \phi - [\phi + c\varphi]_\rho.
	\end{align*}
	In turn, a regularization of the primal formulation \eqref{eq: primal with M} arises:
		\begin{align} \label{eq: primal regularized}
			-M_\rho(f - Au, u - g) 
			= Au - f + [f - Au + c(u - g) ]_\rho 
			= 0
		\end{align}

	Note that this corresponds to setting
		$\Pi_\rho u := [f - Au + c(u - g) ]_\rho$
	in equation \eqref{eq: penalty method} and we conclude that the regularized formulation \eqref{eq: primal regularized} has the structure of a penalty method.
	
	Applying this regularization to the dual formulation \eqref{eq: Dual with M}, we similarly obtain
	\begin{subequations} \label{eq: dual regularized}
		\begin{align}
			Au + \lambda &= f, \\
			M_\rho(\lambda, u - g) &= 0.
		\end{align}
	\end{subequations}

	Due to the smoothness of $M_\rho$, the Newton method becomes an attractive solution strategy and we therefore apply this method to the regularized primal problem \eqref{eq: primal regularized}. This leads us to the penalty method presented as Algorithm~\ref{alg: Newton} below. We remark that $\alpha_\rho^k$ is interpreted as a diagonal operator here.

\begin{algorithm}[ht]
	\caption{Penalty Method}
	\label{alg: Newton}
	\begin{enumerate}[(i)]
		\item Set $k = 0$ and initialize $u^0$.
		\item Compute $\alpha_\rho^k = [f - Au^k + c(u^k - g)]_\rho'$.
		\item Solve for $\delta u$:
		\begin{align} \label{eq: primal Newton}
			( (I - \alpha_\rho^k)A + \alpha_\rho^k c) \delta u &= M_{\rho}(f - Au^k, u^k - g).
		\end{align}
		and set $u^{k + 1} = u^{k} + \delta u$.
		\item Stop if converged, else increment $k$ and return to (ii).
	\end{enumerate}
\end{algorithm}

	We make two observations concerning Algorithm~\ref{alg: Newton}, presented as two lemmas. First, we show an equivalent derivation using the dual formulation \eqref{eq: dual regularized} and secondly, we note the behavior of the scheme as the penalty parameter tends to zero.
	\begin{lemma}
		Applying the Newton method to the regularized dual formulation \eqref{eq: dual regularized} equivalently leads to Algorithm~\ref{alg: Newton}.
	\end{lemma}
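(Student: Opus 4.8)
The plan is to linearize the two dual equations \eqref{eq: dual regularized} about the current iterate $(u^k, \lambda^k)$, assemble the resulting $2 \times 2$ block Newton system, and then eliminate the increment $\delta \lambda$ to recover exactly the primal update \eqref{eq: primal Newton}. First I would compute the Jacobian of the dual residual $F(u,\lambda) = (Au + \lambda - f, \ M_\rho(\lambda, u-g))$. Differentiating the second component $M_\rho(\lambda, u-g) = \lambda - [\lambda + c(u-g)]_\rho$ by the chain rule and writing $\beta_\rho^k := [\lambda^k + c(u^k-g)]_\rho'$ for the scalar derivative evaluated at the \emph{dual} argument, the linearized system reads
\begin{align*}
	\begin{bmatrix} A & I \\ -\beta_\rho^k c & I - \beta_\rho^k \end{bmatrix}
	\begin{bmatrix} \delta u \\ \delta \lambda \end{bmatrix}
	= - \begin{bmatrix} Au^k + \lambda^k - f \\ M_\rho(\lambda^k, u^k-g) \end{bmatrix}.
\end{align*}

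The next step is the key bookkeeping observation: the linear relation $\lambda^k = f - Au^k$ is preserved along the iteration. Since the first dual equation is linear, Newton satisfies it exactly after a single step; concretely, the first block row gives $\delta \lambda = -(Au^k + \lambda^k - f) - A \delta u$, so that $\lambda^{k+1} = \lambda^k + \delta\lambda = f - A u^{k+1}$ regardless of the value of the top residual at iterate $k$. Hence from $k \ge 1$ onward (or from $k = 0$ under the consistent initialization $\lambda^0 = f - Au^0$) the top residual vanishes and $\delta\lambda = -A\,\delta u$. Under this relation the dual argument coincides with the primal one, $\lambda^k + c(u^k - g) = f - Au^k + c(u^k - g)$, whence $\beta_\rho^k = \alpha_\rho^k$, and likewise the second residual satisfies $M_\rho(\lambda^k, u^k-g) = M_\rho(f - Au^k, u^k - g)$.

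Finally, I would substitute $\delta \lambda = -A\,\delta u$ together with $\beta_\rho^k = \alpha_\rho^k$ into the second block row. A short rearrangement yields $[(I - \alpha_\rho^k)A + \alpha_\rho^k c]\,\delta u = M_\rho(f - Au^k, u^k-g)$, which is precisely \eqref{eq: primal Newton}, so that the primal increment $\delta u$ and the update $u^{k+1} = u^k + \delta u$ agree in both schemes. The main obstacle is not the algebra, which is a routine block elimination, but making the equivalence precise: one must \emph{justify} the invariance $\lambda^k = f - Au^k$ (and therefore the coincidence $\beta_\rho^k = \alpha_\rho^k$ of the two diagonal operators) rather than silently assume it, since the two algorithms align only once the first dual equation is maintained exactly. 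Because that equation is linear, Newton enforces it after one step, and the equivalence then holds for every subsequent iterate.
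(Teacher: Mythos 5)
Your proposal is correct and follows essentially the same route as the paper: form the block Newton system for the regularized dual formulation, use $\lambda^k = f - Au^k$ to get $\delta\lambda = -A\,\delta u$ from the (linear) first row, and eliminate $\delta\lambda$ in the second row to recover \eqref{eq: primal Newton}. In fact, your explicit justification of the invariance $\lambda^{k+1} = f - Au^{k+1}$ --- that Newton enforces the linear first equation exactly after one step, regardless of the initial residual --- is a detail the paper's proof merely asserts (``$\lambda^k = f - Au^k$ for $k>0$''), so your write-up is, if anything, slightly more complete.
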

	\begin{proof}

		Let us linearize the dual formulation \eqref{eq: dual regularized} around the previous iterate $(u^k, \lambda^k)$. Applying the Newton method leads to
		\begin{align*}
			\begin{bmatrix}
				A & I \\
				\frac{\partial}{\partial u} M_{\rho}(\lambda^k, u^k - g) &
				\frac{\partial}{\partial \lambda} M_{\rho}(\lambda^k, u^k - g)
			\end{bmatrix}
			\begin{bmatrix}
				\delta u \\
				\delta \lambda
			\end{bmatrix}
			= - \begin{bmatrix}
			Au^k + \lambda^k - f \\
			M_\rho(\lambda^k, u^k - g)
			\end{bmatrix}
		\end{align*}
		
		By introducing $\alpha_\rho^k = [\lambda^k + c(u^k - g)]_\rho'$, we specify the derivatives and rewrite:
		\begin{align}
			\begin{bmatrix}
				A & I \\
				-\alpha_\rho^kc &
				I - \alpha_\rho^k
			\end{bmatrix}
			\begin{bmatrix}
				\delta u \\
				\delta \lambda
			\end{bmatrix}
			&= 
			- \begin{bmatrix}
			Au^k + \lambda^k - f\\
			M_\rho(\lambda^k, u^k - g) 
			\end{bmatrix}. \label{eq: dual Newton}
		\end{align}
		
		Next, we note that $\lambda^k = f - A u^k$ for $k>0$, giving us $\delta \lambda = -A \delta u$ from the first row. Substituting this into the second row gives us
		\begin{align*}
			(-\alpha_\rho^kc - (I - \alpha_\rho^k)A) \delta u
			= -M_\rho(f - A u^k, u^k - g).
		\end{align*}
		Negation of this equation gives us \eqref{eq: primal Newton}, thereby concluding the proof. 
		\end{proof}


	\begin{lemma}
		Algorithm~\ref{alg: Newton} is equivalent to Algorithm~\ref{alg: ASM} in the limit $\rho \downarrow 0$.
	\end{lemma}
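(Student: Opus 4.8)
The plan is to pass to the limit $\rho \downarrow 0$ in the update equation \eqref{eq: primal Newton} of Algorithm~\ref{alg: Newton} and show that it reduces, term by term, to the linear system \eqref{eq: Active Set Method} of Algorithm~\ref{alg: ASM}. I would treat the coefficient operator and the right-hand side of \eqref{eq: primal Newton} separately, and the decisive reconciliation is that Algorithm~\ref{alg: Newton} produces an increment $\delta u$ whereas Algorithm~\ref{alg: ASM} solves directly for $u^{k+1}$.

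First I would analyze the coefficient operator. Since $\lambda^k = f - Au^k$ for $k > 0$ (as already used in the proof of the previous lemma), the argument of $\alpha_\rho^k = [f - Au^k + c(u^k - g)]_\rho'$ coincides with the quantity $\lambda^k + c(u^k - g)$ appearing in the definition \eqref{eq: def active region} of the active set. Invoking the second limit in \eqref{eq: smoothed ramp}, namely $\lim_{\rho \downarrow 0}[\phi]_\rho' = \mathds{1}_{\phi > 0}$, I obtain $\alpha_\rho^k \to \mathds{1}_{\mathcal{A}}^k$ and hence $I - \alpha_\rho^k \to \mathds{1}_{\mathcal{I}}^k$. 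Thus the left-hand operator $(I - \alpha_\rho^k)A + \alpha_\rho^k c$ converges to $\mathds{1}_{\mathcal{I}}^k A + \mathds{1}_{\mathcal{A}}^k c$, which is exactly the operator on the left of \eqref{eq: Active Set Method}.

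Next I would treat the right-hand side. Using the first limit in \eqref{eq: smoothed ramp}, $M_\rho \to M$, so the right-hand side tends to $M(f - Au^k, u^k - g) = (f - Au^k) - [f - Au^k + c(u^k - g)]_+$. Splitting on the two sets and using that $f - Au^k + c(u^k - g)$ is positive on $\mathcal{A}^k$ and nonpositive on $\mathcal{I}^k$, the ramp term vanishes on $\mathcal{I}^k$ and equals its argument on $\mathcal{A}^k$; a short computation then gives the right-hand side $\mathds{1}_{\mathcal{I}}^k (f - Au^k) + \mathds{1}_{\mathcal{A}}^k c(g - u^k)$.

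Finally, I would substitute $\delta u = u^{k+1} - u^k$ into the limiting equation
\begin{align*}
	(\mathds{1}_{\mathcal{I}}^k A + \mathds{1}_{\mathcal{A}}^k c)(u^{k+1} - u^k) = \mathds{1}_{\mathcal{I}}^k (f - Au^k) + \mathds{1}_{\mathcal{A}}^k c(g - u^k),
\end{align*}
expand the left side, and observe that the terms $\mathds{1}_{\mathcal{I}}^k A u^k$ and $\mathds{1}_{\mathcal{A}}^k c u^k$ cancel against the matching contributions on the right, leaving precisely \eqref{eq: Active Set Method}. The main obstacle is this last reconciliation: one must check that the cancellation of the $u^k$ contributions is complete, which hinges on the active/inactive splitting of the coefficient operator matching that of the residual. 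I do not expect the limit itself to be delicate beyond the pointwise convergence recorded in \eqref{eq: smoothed ramp}, although a brief remark on the sense of convergence (pointwise in $V^*$, or nodal in the finite-element setting of \eqref{eq: def active region}) may be warranted for completeness.
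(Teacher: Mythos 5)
Your proof is correct and follows essentially the same route as the paper: pass to the limit $\rho \downarrow 0$ using \eqref{eq: smoothed ramp} to get $\alpha_\rho^k \to \mathds{1}_{\mathcal{A}}^k$ and $M_\rho \to M$, split $M(f - Au^k, u^k - g)$ over the active and inactive sets, and reconcile the increment form with \eqref{eq: Active Set Method} by absorbing the $u^k$ terms (the paper phrases this as adding $(\mathds{1}_{\mathcal{I}}^k A + \mathds{1}_{\mathcal{A}}^k c)u^k$ to both sides, which is identical to your cancellation). Your explicit use of $\lambda^k = f - Au^k$ to identify the argument of $\alpha_\rho^k$ with the quantity defining $\mathcal{A}^k$ in \eqref{eq: def active region} is a detail the paper leaves implicit, and is a welcome clarification.
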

\begin{proof}

	By \eqref{eq: smoothed ramp}, the limit $\rho \downarrow 0$ gives us $\alpha_\rho^k \to \mathds{1}_\mathcal{A}^k$, i.e. the indicator function of $\mathcal{A}^k$. Moreover, the operator $M_{\rho}(\cdot, \cdot)$ on the right-hand side becomes $M(\cdot, \cdot)$. Equation \eqref{eq: primal Newton} then becomes
		\begin{align} 
			( \mathds{1}_{\mathcal{I}}^k A + \mathds{1}_{\mathcal{A}}^k c) \delta u &= M(f - Au^k, u^k - g) 
			= \mathds{1}_{\mathcal{I}}^k (f - Au^k) - \mathds{1}_{\mathcal{A}}^k c(u^k - g)
		\end{align}
	Addition of $( \mathds{1}_{\mathcal{I}}^k A + \mathds{1}_{\mathcal{A}}^k c) u^k$ to both sides of the equation gives us \eqref{eq: Active Set Method}.
\end{proof}

\section{The Adaptive Penalty Method} \label{sec: Adaptive Penalty Method}

In the previous section, we have made two observations. First, introducing a penalty parameter $\rho$ leads to a regularized problem on which the Newton method can be applied. This method is known to be converge (locally) to the solution of the regularized problem. Secondly, as $\rho$ tends to zero, the penalty method becomes equivalent to the active set method, which respects the inequality constraint of \eqref{eq: minimization problem} exactly. The next step is to combine these two advantages into a single iterative method.

With this goal in mind, we modify the penalty method by letting $\rho$ be a spatially varying function on $\Omega$. This allows us to adaptively remove the penalization in regions where the solution is sufficiently accurate. We achieve this by constructing the penalty function $\rho$ as a regularization of the residual. Let us therefore introduce the following differential equation for $\rho$:
\begin{subequations} \label{eq: rho}
	\begin{align}
		\rho - \epsilon \Delta \rho &= \gamma | M(f - Au, u - g) | &\text{ in } & \Omega, \\
		\bm{n} \cdot \nabla \rho &= 0 & \text{ on } & \partial \Omega.
	\end{align}
\end{subequations}
Here, $|\cdot|$ denotes the absolute value, $\bm{n}$ is the outward unit normal vector on $\partial \Omega$ and $\epsilon, \gamma$ are chosen, nonnegative constant parameters. For simplicity, we limit our exposition to these two tuning parameters.

By elliptic regularity of \eqref{eq: rho}, the penalization $\rho$ will tend to zero as the residual becomes smaller. We exploit this property and propose Algorithm~\ref{alg: Adaptive Penalty Method}, which we refer to as the Adaptive Penalty Method (APM).
\begin{algorithm}[ht]
\caption{Adaptive Penalty Method}
\label{alg: Adaptive Penalty Method}
\begin{enumerate}[(i)]
	\item Set $k = 0$ and initialize $u^0$.
	\item Solve \eqref{eq: rho} for the regularization parameter $\rho$ with data $u = u^k$.
	\item Compute $\alpha_\rho^k = [f - Au^k + c(u^k - g)]_\rho'$.
	\item Solve for $\delta u$:
	\begin{align}
		( (I - \alpha_\rho^k)A + \alpha_\rho^k c) \delta u &= M(f - Au^k, u^k - g) .
	\end{align}
	and set $u^{k + 1} = u^k + \delta u$.
	\item Stop if converged, else increment $k$ and return to (ii).
\end{enumerate}
\end{algorithm}

It is important to note that the exact solution to the auxiliary problem \eqref{eq: rho} is not our main priority. Thus, in order to reduce computational cost, it will suffice to use an approximate solution in step (ii) with the use of a coarse solve or multi-grid cycle.

Algorithm~\ref{alg: Adaptive Penalty Method} can be interpreted in a variety of ways. First, the scheme is a quasi-Newton method on \eqref{eq: primal with M} in which the Jacobian gets approximated more accurately as the solution converges. The accuracy of the Jacobian adaptively depends on the residual, hence the chosen name.

Alternatively, the algorithm can be considered a warm-start that gradually behaves like the active set method in convergence. The advantage in this context is that no invasive implementations are necessary to switch from the warm-start to the active set method.

Thirdly, the choice of $\gamma = 0$ results in $\rho = 0$ in all iterations and the scheme is effectively reduced to Algorithm~\ref{alg: ASM}. In that sense, this construction serves as a generalization of primal-dual active set method. This is an advantage in case optimal parameter values are difficult to find, since the scheme can easily be reduced to the active set method without requiring additional, numerical implementation.

Other extreme choices of the parameters lead to different behaviors of the proposed scheme. A large value of $\gamma$, for example, results in a slower decrease of the regularization parameter and therewith, a slower convergence to the solution. On the other hand, setting $\epsilon = 0$ removes the diffusion in \eqref{eq: rho} which typically results in sporadic behavior of the scheme and possibly, loss of convergence. However, choosing a too large value for $\epsilon$ makes the diffusion term dominate which results in a spatially uniform penalty parameter. This is disadvantageous since it leads to unnecessarily poor approximations of the Jacobian in regions where the solution is close to exact. 

\section{Numerical Results} \label{sec: Numerical Results}

In this section, we test the numerical performance of the adaptive penalty method using a synthetic test case. Let us consider an obstacle problem on $\Omega = (0, 1)$. We aim to find $u \in H_0^1(\Omega)$ that weakly satisfies
\begin{subequations}
	\begin{align}
		- \Delta u &\le f, & f(x) &:= 10, \\
		u &\le g, & g(x) &:= 0.2(1 + \mathds{1}_{x > 0.25} + \mathds{1}_{x > 0.5} + \mathds{1}_{x > 0.75}), \\
		\langle \Delta u + f, u - g \rangle &= 0, & \text{in }& \Omega, \\
		u &= 0, & \text{on }& \partial \Omega.
	\end{align}
\end{subequations}

We set the scaling in the Riesz operator $c$ to unity and iterate until the Euclidean norm of the residual is below a tolerance level of 1e-10. In the numerical experiments, we have not observed significant sensitivities of the scheme with respect to $\epsilon$ and therefore limit this exposition to $\epsilon = 1$.

As remarked in the previous section, an interesting variant of the method arises if the penalty parameter is approximated, instead of solving \eqref{eq: rho} exactly. To explore this variant, we perform a solve on a coarse mesh of 16 elements and interpolate back to the original mesh. We compare three methods, namely the primal-dual active set method (Algorithm~\ref{alg: ASM}), the adaptive penalty method (Algorithm~\ref{alg: Adaptive Penalty Method}) introduced in Section~\ref{sec: Adaptive Penalty Method}, and its variant with a coarse solve. The results are shown in Table~\ref{tab: Iterations}.

\begin{table}[ht]
	\caption{Number of iterations necessary to obtain the desired accuracy for the active set method (ASM), the adaptive penalty method with the penalization $\rho$ solved exactly (APM), and on a coarse mesh (C-APM). The proposed schemes obtain the same solution as the active set method in fewer iterations.}
	\label{tab: Iterations}
	\centering
	\footnotesize
	\begin{tabular}{r|lR|RRRR|RRRR}
	\hline

	\hline
	 & \multicolumn{2}{c|}{\textbf{ASM}} & \multicolumn{4}{c|}{\textbf{APM}} & \multicolumn{4}{c}{\textbf{C-APM}} \\ 
	\hline
	$1/h$  & $\gamma$ & 0  & 0.1 & 1 & 10 & 100  & 0.1 & 1 & 10 & 100 \\
	\hline
 		256  & &   52 &  18 &   9 &  13 &  29 &  18 &   9 &  13 &  31 \\
 		512  & &  103 &  33 &  10 &  12 &  33 &  32 &  10 &  13 &  32 \\
 		1024 & &  206 &  69 &  33 &  16 &  45 &  68 &  32 &  14 &  47 \\
 		2048 & &  411 & 162 &  52 &  18 &  50 & 160 &  50 &  18 &  50 \\
 		4096 & &  820 & 382 &  73 &  22 &  53 & 378 &  69 &  26 &  56 \\
 		8192 & & 1639 & 876 & 107 &  31 &  48 & 864 &  99 &  36 &  54 \\
	\hline
	\end{tabular}
\end{table}

From the numerical experiment, we observe that the Adaptive Penalty Method requires significantly fewer iterations than the primal-dual active set method for this problem. As discussed, small values of $\gamma$ cause the scheme to behave like the active set method and this can be observed in the iteration numbers. Moreover, the number of iterations appear robust with respect to the grid size for the largest choices of $\gamma$.

The results from C-APM indicate that the exact evaluation of $\rho$ can be avoided, in practice. This makes the scheme attractive for larger linear systems in terms of computational cost, since there is no need to solve an additional linear system during each iteration. 

To conclude, the proposed Adaptive Penalty Method rapidly converges to the same solution as the primal-dual active set method, which satisfies the constraints of the original problem exactly. The scheme is easily implementable as a penalty method or as a quasi-Newton scheme in existing software. To reduce computational cost, the penalty parameter can be approximated using a coarse solve, without significantly affecting the convergence of the method. 

\begin{acknowledgement}
	This work was partially supported by Norwegian Research Council grant 233736.
\end{acknowledgement}
\bibliographystyle{spmpsci}
\bibliography{biblio}

\end{document}